\DeclareMathAlphabet{\curly}{U}{rsfs}{m}{n}
\newtheorem{thm}{Theorem}
\newtheorem{lem}{Lemma}[section]
\theoremstyle{definition}
\newtheorem{conj}{Conjecture}
\theoremstyle{remark}
\theoremstyle{plain}
\renewcommand{\b}{\ensuremath{\beta}}
\newcommand{\del}{\ensuremath{\delta}}
\newcommand{\lam}{\ensuremath{\lambda}}
\newcommand{\eps}{\ensuremath{\varepsilon}}
\newcommand{\pfrac}[2]{{\left(\frac{#1}{#2}\right)}}
\def\({\left(}
\def\){\right)}
\newcommand{\be}{\begin{equation}}
\newcommand{\ee}{\end{equation}}
\newcommand{\benn}{\begin{equation*}}   
\newcommand{\eenn}{\end{equation*}}
\numberwithin{equation}{section}
\newif\ifdraft
\begin{document}

\title
{The number of solutions of $\lambda(x)=n$}

\author{Kevin Ford and Florian Luca}
\address{KF: Department of Mathematics, 1409 West Green Street, University
of Illinois at Urbana-Champaign, Urbana, IL 61801, USA}
\email{ford@math.uiuc.edu}

\address{FL : Instituto de Matem{\'a}ticas,
 Universidad Nacional Autonoma de M{\'e}xico,
C.P. 58089, Morelia, Michoac{\'a}n, M{\'e}xico}
\email{fluca@matmor.unam.mx }

\date{\today}
\thanks{Research of the first author supported by National Science 
Foundation grants DMS-0555367 and DMS-0901339. Research of the second author was supported in part by Grants SEP-CONACyT 79685 and PAPIIT
100508.}

\begin{abstract} We study the question of whether for each $n$ there is an
$m\ne n$ with $\lambda(m)=\lambda(n)$, where $\lambda$ is 
Carmichael's function. We give a ``near" proof  of the fact that this is the case unconditionally, and 
a complete conditional proof under the Extended Riemann Hypothesis.
\end{abstract}

\maketitle

\centerline{\it To Professor Carl Pomerance on his $65$th birthday}

%
%
\section{Introduction}\label{sec:intro}
%

Let $\lambda(n)$ be the Carmichael function, that is, $\lambda(n)$ is
the largest order of any number modulo $n$.
Recently, Banks et al \cite{BFLPS} made the following conjecture:

\begin{conj}\label{lambda}
For every positive integer $n$, there is an integer $m\ne n$ with
$\lambda(m)=\lambda(n)$. 
\end{conj}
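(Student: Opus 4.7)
My plan is to exhibit $m$ of the form $qn$, where $q$ is a prime with $q \nmid n$ and $q - 1 \mid \lambda(n)$; this guarantees $\lambda(qn) = \operatorname{lcm}(\lambda(n), q - 1) = \lambda(n)$. Three easy reductions dispose of preliminary cases: if $n$ is odd take $m = 2n$ (using $\lambda(2) = 1$); if $n \equiv 2 \pmod 4$ take $m = n/2$; and if $n$ is a power of $2$ with $n \ge 4$ take $m = 3n$, since then $\lambda(3) = 2$ divides $\lambda(n)$. Thus I may assume $4 \mid n$ and $n$ has at least one odd prime factor, which forces $2 \mid \lambda(n)$.

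Under these hypotheses I try small auxiliary primes $q = 3, 5, 7, 11, 13, \dots$ in turn: for each, $q - 1 \mid \lambda(n)$ becomes an explicit arithmetic condition on the prime factorization of $n$ (e.g.\ $6 \mid \lambda(n)$ requires $n$ to be divisible either by $9$ or by some prime $p \equiv 1 \pmod 3$), and as soon as some such $q$ also satisfies $q \nmid n$ we are done. A second, independent lever is an ``adjust the $2$-part'' move: writing $n = 2^a n'$ with $n'$ odd and setting $c = v_2(\lambda(n'))$, every $b \in \{2, 3, \dots, c+2\} \setminus \{a\}$ yields a valid $m = 2^b n'$, because the $2$-part of $\operatorname{lcm}(\lambda(2^b), \lambda(n'))$ is controlled by $\max(b-2, c)$ and stays equal to $c$ over this range. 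Since $n' > 1$ forces $c \ge 1$, this alone almost always produces an alternative solution.

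The residual $n$ are tightly constrained: no small shifted-divisor prime $q$ is available and the $2$-part is already saturated. The extremal case is $\lambda(n)$ a power of $2$, which forces every prime factor of $n$ to be $2$ or a Fermat prime; I would finish this in a case analysis leveraging the finiteness of the five known Fermat primes together with a ``remove-a-prime'' move of the form $m = n/p$ when $p \parallel n$ and $p - 1 \mid \lambda(n/p)$. The main obstacle of the paper, and the point at which ERH should enter, is constructing a suitable $q$ in full generality when $\lambda(n)$ is smooth or has few divisors: here I would select a large divisor $d \mid \lambda(n)$ and use a quantitative form of Dirichlet in the progression $1 \pmod d$ to pin $q - 1$ to a short list of candidates forced to divide $\lambda(n)$. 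Under ERH the effective bounds on $\pi(x; d, 1)$ are strong enough to close the argument, whereas unconditionally one is left with a sparse exceptional set of $n$, which explains the ``near'' proof promised in the abstract.
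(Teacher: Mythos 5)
Your preliminary reductions (odd $n$, $n \equiv 2 \pmod 4$, powers of $2$) are correct, and the observation $\lambda(qn) = \lambda(n)$ when $q \nmid n$ and $(q-1) \mid \lambda(n)$ is the right basic move. But the plan contains two genuine gaps. First, the Fermat-prime case is disposed of by ``leveraging the finiteness of the five known Fermat primes,'' which is an open conjecture, not a theorem; you cannot assume there are only finitely many, and the ``remove-a-prime'' move you pair with it fails whenever the $2$-part of $n$ is not large enough to absorb $F_r - 1$. Second, and more fundamentally, producing a prime $q \equiv 1 \pmod d$ for a divisor $d$ of $\lambda(n)$ does \emph{not} give $q - 1 \mid \lambda(n)$: Dirichlet gives no control on the cofactor $(q-1)/d$, which is generically large and coprime to $\lambda(n)$, and $d+1$ itself need not be prime for any useful $d$. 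Your phrase ``pin $q-1$ to a short list of candidates forced to divide $\lambda(n)$'' names precisely what must be shown and supplies no mechanism for showing it. This is the crux of the problem, and the sketch does not engage with it.

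The paper's route is structurally different. Rather than constructing $m$ directly for every $n$, it argues with the hypothetical \emph{smallest} counterexample $n_0$; minimality is what drives Lemma~\ref{lem1}, which shows $2^4 \mid n_0$ and that $(p-1)\mid\lambda(n_0)$ forces $p^2 \mid n_0$ --- your proposal never uses minimality at all. The key new object is the invariant $f(q)$, the largest proper prime power anywhere in the Pratt-style factorization tree of $q$. Taking $p^{a+1}$ to be the least prime power not dividing $\lambda(n_0)$, one proves by induction on primes $r$ that $f(r) < p^{a+1}$ forces $r^2 \mid n_0$ (so in particular $r \mid \lambda(n_0)$), and then Conjecture~\ref{conjB} --- existence of a prime $q$ with $p^a \mid (q-1)$ and $f(q) < p^{a+1}$ --- yields the contradiction $\lambda(n_0)=\lambda(n_0/p)$. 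It is Conjecture~\ref{conjB}, not Conjecture~\ref{lambda}, that is attacked analytically (under ERH, or unconditionally for $p^a$ large): the condition $f(q)<p^{a+1}$ is amenable to a prime-chain count, whereas your condition $q-1\mid\lambda(n)$ is not. That pivot --- from ``$q-1$ divides $\lambda(n)$'' to ``the whole Pratt tree of $q$ is built from prime powers below $p^{a+1}$,'' combined with the minimal-counterexample induction that makes the latter condition sufficient --- is the missing idea.
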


The analogous question for the Euler function
$\phi(n)$ is known as Carmichael's conjecture and remains unsolved.
If there are counterexamples to Conjecture \ref{lambda}, 
the authors of \cite{BFLPS}
proved that all such counterexamples $n$ are multiples of the smallest
counterexample $n_0$.  Further, they showed that if $n_0$ exists, then $n_0$
is divisible by every prime less than $30000$.
In this note, we prove that Conjecture \ref{lambda} follows from the Extended 
Riemann Hypothesis (ERH) for Dirichlet $L$-functions, and also we come very
close to proving the conjecture unconditionally.

If $n$ has prime factorization $n=p_1^{e_1} \cdots p_k^{e_k}$, then
$\lambda(n) = [\lambda(p_1^{e_1}),\ldots,\lambda(p_k^{e_k})]$, where
$[a_1,\ldots,a_k]$ denotes the least common multiple of $a_1,\ldots,a_k$,
$\lambda(p^e)=p^{e-1}(p-1)$ when $p$ is odd or $e\le 2$, and
$\lambda(2^e)=2^{e-2}$ when $e\ge 3$.
The following is proved in \S 7 of \cite{BFLPS}.

\begin{lem}\label{lem1}
Suppose $n_0$ exists, that is, Conjecture \ref{lambda} is false.  
Then (i) $2^4|n_0$ and
(ii) if $(p-1)|\lambda(n_0)$ for a prime $p$, then $p^2|n_0$.
\end{lem}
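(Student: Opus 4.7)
The plan is to prove each part by contradiction: if the stated divisibility fails for $n_0$, I will exhibit an integer $m \neq n_0$ with $\lambda(m) = \lambda(n_0)$, contradicting the assumption that $n_0$ is the unique preimage of $\lambda(n_0)$ under $\lambda$.

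For part (i), I would carry out a case analysis on $e := \nu_2(n_0) \in \{0,1,2,3\}$, exploiting the coincidences $\lambda(1) = \lambda(2) = 1$ and $\lambda(4) = \lambda(8) = 2$. Writing $n_0 = 2^e N$ with $N$ odd and letting $e'$ denote the partner of $e$ in the pair $\{0,1\}$ or $\{2,3\}$, I set $m := 2^{e'} N$; since $\lambda(2^{e'}) = \lambda(2^e)$, one has $\lambda(m) = [\lambda(2^{e'}), \lambda(N)] = [\lambda(2^e), \lambda(N)] = \lambda(n_0)$ while $m \neq n_0$, yielding the required contradiction.

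For part (ii), I may assume $p$ is odd, since the case $p = 2$ follows from (i). Suppose $p^2 \nmid n_0$. If $p \nmid n_0$, I take $m := p n_0$: coprimality together with the hypothesis $(p-1) \mid \lambda(n_0)$ yields $\lambda(m) = [p-1, \lambda(n_0)] = \lambda(n_0)$. Otherwise $p$ exactly divides $n_0$; writing $n_0 = p n_0'$ with $\gcd(p, n_0') = 1$, we have $\lambda(n_0) = [p-1, \lambda(n_0')]$, and in the easy subcase where $(p-1) \mid \lambda(n_0')$ the choice $m := n_0'$ works immediately, since then $\lambda(n_0) = \lambda(n_0')$ and $n_0' < n_0$.

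The delicate remaining subcase is $p \| n_0$ together with $(p-1) \nmid \lambda(n_0')$, and this is where the minimality of $n_0$ is invoked most sharply. Since $n_0' < n_0$, the integer $n_0'$ is not itself a counterexample, so there exists $m' \neq n_0'$ with $\lambda(m') = \lambda(n_0')$. I claim $p \nmid m'$: indeed, if $p^a$ exactly divided $m'$ for some $a \geq 1$, then the factor $\lambda(p^a) = p^{a-1}(p-1)$ appearing in the lcm defining $\lambda(m')$ would force $(p-1) \mid \lambda(m') = \lambda(n_0')$, contradicting the subcase assumption. Hence $\gcd(p, m') = 1$, so $m := p m'$ satisfies $\lambda(m) = [p-1, \lambda(m')] = [p-1, \lambda(n_0')] = \lambda(n_0)$ and $m \neq n_0$, completing the contradiction. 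The main subtlety of the whole argument is this last pivot on the minimality of $n_0$ applied to the smaller integer $n_0'$.
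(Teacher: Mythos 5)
Your proof is correct and follows essentially the same approach as the paper. For (i), both arguments exploit the coincidences $\lambda(1)=\lambda(2)$ and $\lambda(4)=\lambda(8)$ to toggle the power of $2$ dividing $n_0$; for (ii), both first force $p\mid n_0$ via the witness $pn_0$, then assume $p\|n_0$ and invoke minimality of $n_0$ to get a companion $m'\neq n_0/p$ with $\lambda(m')=\lambda(n_0/p)$, show $p\nmid m'$, and take $pm'$ as the witness. The only cosmetic difference is that you split into the subcases $(p-1)\mid\lambda(n_0')$ and $(p-1)\nmid\lambda(n_0')$, whereas the paper folds the first subcase into the proof that $p\nmid m$ (if $p\mid m$ then $(p-1)\mid\lambda(n_0/p)$, whence $\lambda(n_0)=\lambda(n_0/p)$ and $n_0/p$ itself is a witness) — but this is the same contradiction, just packaged differently.
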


\begin{proof}
Since $\lambda(1)=\lambda(2)$ and $\lambda(4)=\lambda(8)$,
part (i) follows.  If $(p-1)|\lambda(n_0)$ and $p\nmid n_0$, then
$\lambda(n_0)=\lambda(pn_0)$, which proves that $p|n_0$.  
Assume that $p^2\nmid n_0$.
By the minimality of $n_0$, $\lambda(n_0/p)=\lambda(m)$ for some $m\ne n_0/p$.
We have $p\nmid m$, else $(p-1)|\lambda(n_0/p)$ and
$\lambda(n_0)=\lambda(n_0/p)$.  Thus,
$$
\lambda(n_0)=[p-1,\lambda(n_0/p)] = [p-1,\lambda(m)] = \lambda(pm),
$$
a contradiction.  Therefore, $p^2 | n_0$, proving (ii).
\end{proof}

With Lemma \ref{lem1}, it is easy to show that many primes must divide
$n_0$.  For example, by (i) and (ii) with $p=3$ and $p=5$, 
we immediately
obtain $3^2|n_0$ and $5^2|n_0$.  Thus, $2^2 \cdot 3\cdot 5 |\lambda(n_0)$,
and by (ii) again, $n_0$ is divisible by $7^2$, $11^2$, $13^2$, 
$31^2$ and $61^2$.   
Subject to certain hypotheses, we may continue this
process and deduce that every prime must divide $n_0$, which would prove
Conjecture \ref{lambda}.

\bigskip

{\bf Notation.}  Throughout, the letters $p,q,r,s$, with or without
subscripts, will always denote primes.  By \emph{prime power} we mean
a number of the form $p^a$ where $p$ is prime and $a\ge 1$, and a
\emph{proper prime power} is a prime power with $a\ge 2$.

\bigskip

For a prime $q$, we construct
a tree $T(q)$ with $q$ as the root node as follows.  Below $q$ form links
 to each prime power $p^e$ with $p^e \| (q-1)$.  Now continue the process,
linking each $p^e$ to the prime powers $r^b$ with $r^b \| (p-1)$, etc.
The end result will be a tree with leaves
which are powers of 2.  For example, here is the tree corresponding to
$q=149$.

\medskip

\Tree [.149 [.$2^2$ ] [.37 [.$2^2$ ] [.$3^2$ [.2 ] ] ] ]

\medskip

Let $f(q)$ denote the
largest proper prime power occurring in the tree.  Set $f(q)=1$ if
there are no proper prime powers in the tree; this only happens when
$q\in \{2,3,7,43\}$ 
(If $q$ is the smallest prime $>43$ with $f(q)=1$, then $q-1$ is squarefree and
 $q > 2\cdot 3 \cdot 7\cdot 43 + 1$ by explicit calculation,
so $q-1$ has a prime divisor $r$ other than
$2,3,7,43$.  By the minimality of $q$, $f(r)>1$ and therefore
$f(q)>1$, a contradiction).  Alternatively,
we may define $f(q)$ inductively by the formulas $f(2)=1$ and if
$q\ge 3$ and $q-1=p_1^{e_1} \cdots p_k^{e_k}$ with
$e_1=\cdots=e_h=1<e_{h+1}\le e_{h+2} \le \cdots \le e_k$, then
$$
f(q) = \max ( f(p_1), \ldots, f(p_h), p_{h+1}^{e_{h+1}}, \ldots ,p_k^{e_k} ).
$$
For example, $f(149)=9$.  The tree $T(q)$ is similar to the tree
constructed for the Pratt primality certificate \cite{Pratt}.

\begin{conj}\label{conjB}
For every prime power $p^a$, there is a prime $q$ with $p^a |(q-1)$ and
$f(q)<p^{a+1}$.
\end{conj}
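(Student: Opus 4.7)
The plan is to establish Conjecture \ref{conjB} under the Extended Riemann Hypothesis (ERH), by combining ERH-based counts of primes in arithmetic progressions mod $p^a$ with a recursive sieve bound on the density of primes whose tree contains a large proper prime power. Set $Y := p^{a+1}$. Under ERH, the number of primes $q \leq X$ with $q \equiv 1 \pmod{p^a}$ satisfies $\pi(X; p^a, 1) \sim X/(\phi(p^a)\log X)$ as soon as $X \gg \phi(p^a)^2 (\log p^a)^2$, so I would choose $X := p^{2a + o(1)}$ just above this threshold in order to have ``many'' primes in the progression to play with.

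Next, let $B(X, Y) := \#\{q \leq X \text{ prime} : f(q) \geq Y\}$ and let $B(X,Y;p^a)$ be the analogous count restricted to $q \equiv 1 \pmod{p^a}$. By the inductive definition of $f$, if $f(q) \geq Y$ then either (a) some proper prime power $s^b \geq Y$ with $b \geq 2$ satisfies $s^b \| q-1$, or (b) some prime $r \mid q-1$ has $f(r) \geq Y$. Brun--Titchmarsh handles case (a): summing over proper prime powers $s^b \geq Y$ with $b \geq 2$ we get
\[
\sum_{s^b \geq Y,\, b \geq 2} \#\{q \leq X : s^b \mid q-1\} \ll \frac{X}{\log X}\sum_{s^b \geq Y,\, b \geq 2} \frac{1}{\phi(s^b)} \ll \frac{X}{\sqrt{Y}\, \log X},
\]
while case (b), via double counting pairs $(r,q)$ with $r\mid q-1$, contributes
\[
\sum_{r \leq X,\ f(r) \geq Y} \#\{q \leq X : r \mid q-1\} \ll \sum_{r \leq X,\ f(r) \geq Y} \frac{X}{r\, \log(X/r)}.
\]
Partial summation against the unknown $B(\cdot, Y)$ turns the combined inequality into a self-referential bound which, by induction on $X$ (with trivial base case $X < Y$, where any proper prime power in $T(q)$ is at most $q-1 < Y$), yields $B(X, Y) \ll X/(\sqrt{Y}\, \log X)$.

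Imposing the extra constraint $q \equiv 1 \pmod{p^a}$ at the root node costs only the factor $1/\phi(p^a)$ in every Brun--Titchmarsh step, so the same argument yields $B(X, Y; p^a) \ll X/(\phi(p^a)\sqrt{Y}\, \log X)$. With $Y = p^{a+1}$ and $X = p^{2a + o(1)}$, this ``bad'' count is of order $X/(\phi(p^a) p^{(a+1)/2} \log X)$, which is asymptotically negligible compared to the ERH lower bound $\pi(X; p^a,1) \gg X/(\phi(p^a)\log X)$. Hence there must exist a prime $q \leq X$ with $p^a \mid q-1$ and $f(q) < p^{a+1}$. For small $p^a$ (below the range where the asymptotics bite), I would verify the conjecture by a direct computation using the inductive formula for $f$.

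The main obstacle is closing the recursive sieve estimate while preserving the crucial $1/\sqrt{Y}$ savings across every level of the tree $T(q)$. Each level adds a Brun--Titchmarsh application with its own log factor, and the depth of $T(q)$ may grow like $\log\log X$, so the induction on $X$ must be calibrated to absorb these losses (perhaps by iterating in small multiplicative steps rather than all at once). A secondary, and more serious, obstacle is the unconditional version: without ERH there is no guarantee of primes $q \equiv 1 \pmod{p^a}$ as small as $X = p^{2a+o(1)}$, and this is presumably why only a ``near'' unconditional result is available -- one would need either a log-free zero-density estimate or an averaged Bombieri--Vinogradov input, neither of which apply to the single fixed modulus $p^a$.
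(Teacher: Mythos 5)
Your high-level decomposition is the same as the paper's: split the event $f(q)\ge Y$ according to whether the offending proper prime power $s^b\ge Y$ sits directly below the root ($s^b\,\|\,q-1$) or lives in a subtree hung off some prime $r\mid q-1$ with $f(r)\ge Y$, bound the first case by a divisor count and the second recursively. You also correctly identify the ERH input and the role of \eqref{fqp} for small $p^a$. However, the step you flag as your ``main obstacle'' -- closing the recursive sieve -- is a genuine gap, and it is precisely the point where your claimed bound $B(X,Y)\ll X/(\sqrt{Y}\log X)$ is too strong. If you assume this bound for $t<X$ and substitute into
\[
B(X,Y)\le \frac{C_1X}{\sqrt Y\log X}+C_2X\sum_{\substack{Y<r\le X/2\\ f(r)\ge Y}}\frac{1}{r\log(X/r)},
\]
partial summation against the induction hypothesis produces an extra factor of roughly $\log\log X-\log\log Y$, because $\int_Y^{X/2}\frac{dt}{t\log t\log(X/t)}\asymp\frac{\log\log X-\log\log Y}{\log X}$. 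Thus the inductive step delivers $B(X,Y)\ll\frac{X\log\log X}{\sqrt Y\log X}$, not $\frac{X}{\sqrt Y\log X}$, and iterating in ``small multiplicative steps'' does not remove this: the loss is structural, reflecting the $\asymp\log\log X$ possible depth of the Pratt-type tree.

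The paper sidesteps this by \emph{not} attempting a log-sharp bound. It invokes Lemma~\ref{chains} (Theorem~2 of \cite{FKL}), a prime-chain count proved by a Rankin/matrix generating-function method, which gives $\#\{\text{chains }p\to\cdots\to p_k\le x\}\le C(\eps)(x/p)^{1+\eps}$. This concedes an $\eps$ in the exponent of $x$, but gains $\eps$ in the exponent of the chain's base $p$, and crucially has no iterated-log losses. The resulting Lemma~\ref{fqlarge} bound is $\ll_{\eps} x^{1+\eps}/(y^{1/2+\eps}\log y)$, not $x/(\sqrt y\log x)$; the paper then takes $x=p^{3a}$ (more generous than your $p^{2a+o(1)}$) so that the $\eps$-loss is comfortably beaten by the ERH main term. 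A secondary point: you suggest imposing $q\equiv1\pmod{p^a}$ ``in every Brun--Titchmarsh step,'' but in the paper's Lemma~\ref{fws} the congruence is imposed only at the root, and the subtree primes $r$ are unrestricted -- one simply uses the trivial count $\#\{q\le x:\,p^ar\mid q-1\}\le x/(p^ar)$ with $r\le x/p^a$, then feeds in the \emph{unrestricted} Lemma~\ref{fqlarge} via partial summation. Without an ingredient of the strength of the FKL chain lemma (or an equivalent way to avoid the depth-induced log losses), your sieve recursion as written does not close.
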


Note that we must have $p^a \| (q-1)$.

\begin{thm}\label{thm2}
Conjecture \ref{conjB} implies Conjecture \ref{lambda}.
\end{thm}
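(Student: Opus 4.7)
The plan is to assume Conjecture~\ref{conjB} and suppose that $n_0$ exists; I will derive a contradiction by proving via strong induction on $N$ that every prime $q<N$ divides $n_0$ and every prime power $r^c<N$ divides $\lambda(n_0)$. Granting this for all $N$, $n_0$ has infinitely many prime divisors, which is impossible. The case $N=q$ prime is easy: each prime power factor of $q-1$ is below $q$, hence divides $\lambda(n_0)$ by the inductive hypothesis, so $(q-1)\mid\lambda(n_0)$ and Lemma~\ref{lem1}(ii) yields $q\mid n_0$.

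The hard case is $N=p^a$ with $a\ge 2$. I first prove a strengthening of Lemma~\ref{lem1}(ii): for $p$ odd with $p\mid n_0$ and $v:=v_p(n_0)\ge 2$, one has $v_p(\lambda(n_0))=v-1$ (the $p=2$ analog, valid for $v_2(n_0)\ge 4$, reads $v_2(\lambda(n_0))=v_2(n_0)-2$, reflecting $\lambda(2^e)=2^{e-2}$). The proof copies Lemma~\ref{lem1}(ii): writing $M=\lambda(n_0/p^v)$, if $v_p(M)\ge v-1$ to the contrary, the lcm formula shows $\lambda(n_0)=\lambda(n_0/p)$, contradicting the counterexample status of $n_0$. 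Combined with the IH that $p^{a-1}\mid\lambda(n_0)$, this forces $p^a\mid n_0$ and turns the goal $p^a\mid\lambda(n_0)$ into the equivalent assertion $p^{a+1}\mid n_0$. Assume for contradiction $p^{a+1}\nmid n_0$, so $p^a\|n_0$; the strengthened lemma then gives $v_p(s-1)\le a-2$ for every $s\mid n_0$ with $s\ne p$.

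Now apply Conjecture~\ref{conjB} \emph{at the prime power $p^{a-1}$}, not at $p^a$ (which would be circular). This produces a prime $q$ with $p^{a-1}\|(q-1)$ and $f(q)<p^a$. Since $v_p(q-1)=a-1>a-2$ and $q\ne p$ (forced by $a\ge 2$), $q$ cannot be any divisor of $n_0$, so $q\nmid n_0$ and therefore $(q-1)\nmid\lambda(n_0)$ by Lemma~\ref{lem1}(ii). Some prime power $r^c\|(q-1)$ fails to divide $\lambda(n_0)$; since $p^{a-1}$ itself divides $\lambda(n_0)$ (by IH), $r\ne p$. If $c\ge 2$ then $r^c\le f(q)<p^a$, placing $r^c$ in the IH and giving a contradiction; if $c=1$ and $r<p^a$, the IH gives $r\mid n_0$, hence $r\mid\lambda(n_0)$, again a contradiction. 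So $r$ is a prime with $r\ge p^a$, $r\ne p$, and $r\nmid\lambda(n_0)$; the contrapositive of Lemma~\ref{lem1}(ii) then yields $(r-1)\nmid\lambda(n_0)$, and the same case analysis applied to $r$ produces yet another such prime one deeper level in the tree $T(q)$. Iterating traces a descending path through $T(q)$. But $T(q)$ has finite depth and every leaf is a power of $2$, whose only prime value $2$ is less than $p^a$, so the descent cannot continue --- the desired contradiction. The main obstacles I anticipate are formulating the strengthened version of Lemma~\ref{lem1} cleanly (with extra care for $p=2$) and spotting that Conjecture~\ref{conjB} must be applied at $p^{a-1}$ rather than at $p^a$, the key to breaking the apparent circularity.
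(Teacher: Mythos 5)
Your proposal is correct, and it takes a genuinely different route from the paper's argument even though both rely on the same ingredients (Lemma~\ref{lem1}, Conjecture~\ref{conjB}, the tree $T(q)$). The paper fixes $p^{a+1}$ to be the \emph{smallest} prime power not dividing $\lambda(n_0)$ and runs a single inner induction on primes $r$, proving that $f(r)<p^{a+1}$ forces $r^2\mid n_0$; it then applies Conjecture~\ref{conjB} at $p^a$, concludes $q^2\mid n_0$ for the resulting prime, and gets the contradiction $\lambda(n_0)=\lambda(n_0/p)$ from the lcm identity. You instead run an outer strong induction on $N$, isolate a ``strengthened Lemma~\ref{lem1}(ii)'' giving the exact $p$-adic valuation $v_p(\lambda(n_0))=v_p(n_0)-1$ (this fact is implicit in the paper's derivation of $p^b\|n_0$ but you make it the engine of the argument), apply Conjecture~\ref{conjB} at $p^{a-1}$, and derive the \emph{opposite} conclusion $q\nmid n_0$; your contradiction then comes from a descent through $T(q)$ rather than from an lcm computation. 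Both contradictions are valid, since both $q^2\mid n_0$ and $q\nmid n_0$ are consequences of the false hypothesis — you and the paper simply harvest different absurdities. A few small imprecisions in your write-up are worth flagging: in the prime case $N=q$, Lemma~\ref{lem1}(ii) actually yields $q^2\mid n_0$ (not merely $q\mid n_0$), and you need that stronger conclusion to establish part (b) of your inductive statement, namely $q\mid\lambda(n_0)$; in the descent the phrase ``the IH gives $r\mid n_0$, hence $r\mid\lambda(n_0)$'' is a non sequitur as written (the implication $r\mid n_0\Rightarrow r\mid\lambda(n_0)$ is false in general) — what you actually want is part (b) of the IH, which gives $r\mid\lambda(n_0)$ directly for any prime $r<p^a$; and the termination of the descent is more cleanly justified by noting that the primes $q>r_1>r_2>\cdots$ strictly decrease while all remaining $\ge p^a$, rather than by appealing to leaves of $T(q)$. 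None of these affect the soundness of your approach. Your observation that Conjecture~\ref{conjB} must be applied at $p^{a-1}$ to avoid circularity is exactly the right instinct; after translating notations (your $a$ is the paper's $a+1$) the paper applies the conjecture at the same exponent, so neither argument is circular.
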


\begin{proof}
Suppose Conjecture \ref{conjB} is true and Conjecture \ref{lambda} is false.
Let $p^{a+1}$ be the smallest prime power not dividing
$\lambda(n_0)$ (here $a\ge 0$).  Each prime power divisor of $p-1$ is 
$<p^{a+1}$ and hence $(p-1)|\lambda(n_0)$.
Lemma \ref{lem1} implies that $p^2|n_0$, thus $p|\lambda(n_0)$ and $a\ge 1$.
Let $b=a+1$ if $p>2$ and $b=a+2$ if $p=2$, so that
$\lambda(p^b)=p^a(p-1)$.  We have $p^b\| n_0$, since $p^{b+1}|n_0$ implies
$p^{a+1}|\lambda(n_0)$ and $p^b \nmid n_0$ implies $\lambda(n_0)=
\lambda(pn_0)$.  We next claim that
every prime $r$ with $f(r)<p^{a+1}$ satisfies $r^2|n_0$.
Proceed by induction on $r$, noting that the case $r=2$ is taken care of by
Lemma \ref{lem1} (i).  Suppose $s>2$, $f(s)<p^{a+1}$
and every prime $r<s$ with $f(r)<p^{a+1}$ 
satisfies $r^2|n_0$.  If $r \| (s-1)$,
then   $f(r)<p^{a+1}$ and hence $r|\lambda(n_0)$, and if $r^c \| (s-1)$
with $c\ge 2$ then $r^c < p^{a+1}$ and hence $r^c|\lambda(n_0)$.
Consequently, $(s-1)|\lambda(n_0)$, and applying Lemma \ref{lem1} once again
we see that $s^2|n_0$.
By hypothesis, there is a prime $q$ with $p^a | (q-1)$ and $f(q)<p^{a+1}$.
In particular, $q^2|n_0$ and $q|\lambda(n_0)$.
This means $p^a | \lambda(n_0/p^b)$ and
$$
\lam(n_0)=[\lam(p^b), \lam(n_0/p^b)] = [\lam(p^{b-1}), \lam(n_0/p^b)]
= \lam(n_0/p),
$$
a contradiction.
\end{proof}

\bigskip

We pose the following questions. (1) 
For each proper prime power $p^a$, is there a prime $q$ with $f(q)=p^a$ ?  
(2) Is there a prime power $p^a$ so that there are infinitely many
primes $q$ with $f(q)=p^a$ ?
(3) Does $f(q)\to \infty$ as $q\to \infty$?  Computations suggest that
there are infinitely many primes $q$ with $f(q)=4$, but this will be very
difficult to prove.

It is clear that $f(q)$ is at most the largest prime power dividing 
$q-1$, thus 
\be\label{fqp}
p^a \| (q-1) \; \text{ and } \; q < p^{2a+1} \implies f(q)<p^{a+1}.
\ee
Hence, it is almost
sufficient to find a prime $q\equiv 1\pmod{p^a}$ with $q < (p^a)^{2+1/a}$.
Let $P(b,m)$ denote the least prime which is $\equiv b\pmod{m}$.
Linnik proved that there is a constant $L$ such that $P(b,m)\ll m^L$
for all coprime $b,m$.  The best constant known today is $L=5.5$
and due to Heath-Brown.   However, the Extended Riemann Hypothesis (ERH)
for Dirichlet $L$-functions implies that
\be\label{ERH}
\left| \pi(x,m,b) - \frac{\text{li}(x)}{\phi(m)}\right| \le x^{1/2} \log (xm^2)
\ee
uniformly in $x,m,b$ \cite{Oes}, where $\pi(x,m,b)$ is the number
of primes $r\le x$ with $r\equiv b\pmod{m}$ and
$\text{li}(x)=\int_2^x \frac{dt}{\log t} \sim
\frac{x}{\log x}$.   Consequently, we may take $L=2+\eps$ for any fixed 
$\eps$.  Using \eqref{ERH} and a finer analysis of $f(q)$,
we prove the following.

\begin{thm}\label{thm:ERH}
ERH implies Conjecture \ref{conjB}.
\end{thm}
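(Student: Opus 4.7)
The plan is to combine \eqref{ERH} with a sharpening of the implication \eqref{fqp}. First I would prove that $f(q)<p^{a+1}$ already holds whenever $p^a\|(q-1)$ and every prime power $r^c\|(q-1)$ with $r\ne p$ satisfies $r^c<p^{a+1}$. Indeed, the proper prime powers directly below $q$ in $T(q)$ are then bounded by $p^{a+1}$, and for each prime divisor $r$ of $q-1$ the inequality $r<p^{a+1}$ implies inductively that every prime power appearing in $T(r)$ is at most $r-1<p^{a+1}$. This is strictly weaker than the hypothesis $q<p^{2a+1}$ of \eqref{fqp} and provides the extra room needed for the ERH input.

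Next I would count primes $q\le x$ of the above form by one-step inclusion-exclusion:
$$
\#\{q\le x\text{ admissible}\}\ \ge\ \pi(x,p^a,1)-\pi(x,p^{a+1},1)-\sum_{\substack{r\ne p,\ r^c\ge p^{a+1}\\ r^c\le x/p^a}}\pi(x,p^ar^c,1).
$$
Applying \eqref{ERH} to each term and using $1/\phi(p^a)-1/\phi(p^{a+1})=1/p^a$, the main contribution reduces to
$$
\frac{\text{li}(x)}{p^a}\Bigl(1-\frac{p}{p-1}\sum_{r^c}\frac{1}{\phi(r^c)}\Bigr),
$$
where the inner sum over sieve moduli is dominated by the contribution of primes $r\in[p^{a+1},x/p^a]$ and evaluates to $O(\log(\log x/\log p^{a+1}))$. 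The cumulative ERH error is $O(x^{1/2}\log^2(xp^a))$, the square on the logarithm absorbing the $O(\log x)$ sieve terms.

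I would then choose $x$ slightly above $p^{2a+1}$, large enough for $\text{li}(x)/p^a$ to dominate the ERH error by a definite factor but small enough that the sieve correction stays a bounded fraction of the main term. This produces a positive count, hence a suitable prime $q$, for every prime power $p^a$ exceeding some effective threshold $N_0$. The finitely many $p^a\le N_0$ are dispatched by a direct search for $q$ with $p^a\|(q-1)$ and $f(q)<p^{a+1}$; by Theorem~\ref{thm2}, producing such a $q$ for every such $p^a$ suffices.

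The main obstacle will be small primes, especially $p=2$ and $p=3$, where the $p^{1/2}$ factor in the ERH main-to-error ratio is smallest while the sieve correction worsens as $a$ grows. Keeping $N_0$ modest enough for the direct verification to be feasible will require careful optimisation of $x$ as a function of $p^a$ and explicit bookkeeping of the constants hidden in \eqref{ERH}.
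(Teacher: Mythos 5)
Your sharpening of \eqref{fqp} is correct as far as it goes, but the sieve it forces is far too heavy, and the accounting in the middle of your proposal is wrong in a way that cannot be repaired by optimisation. The sufficient condition you isolate requires \emph{every} prime power $r^c\|(q-1)$ with $r\ne p$ — including $c=1$, i.e.\ every prime divisor $r$ of $q-1$ — to be $<p^{a+1}$. Your exclusion sum therefore runs over all primes $r\in[p^{a+1},x/p^a]$, which is about $\pi(x/p^a)\gg (x/p^a)/\log x$ terms, not the ``$O(\log x)$ sieve terms'' you claim. If you feed each of these into \eqref{ERH}, the cumulative error is of order $(x/p^a)\cdot x^{1/2}$, which swamps the main term $\mathrm{li}(x)/p^a$. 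If instead you estimate each $\pi(x;p^ar,1)$ trivially, the sieve sum contributes a factor $\approx\sum_{p^{a+1}\le r\le x/p^a}1/(r-1)\approx \log\bigl(\log(x/p^a)/\log p^{a+1}\bigr)$ of the main term, which must be kept below $1-1/p$; for $p=2$ this forces $x$ to be extremely close to $p^{2a+1}$, and then the single ERH error $x^{1/2}\log(xp^{2a})$ is already larger than $\mathrm{li}(x)/p^a$ once $a$ is moderate (you need roughly $p\gg(2a+1)^4\log^4 p$, which fails for $p=2,3$ and all relevant $a$). So the window in $x$ you hope to exploit does not exist for the small primes, and the finitely-many-cases fallback is not finite: for $p=2$ alone, arbitrarily large $a$ remain.

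The paper sidesteps exactly this. Instead of demanding that every prime $r\mid(q-1)$ be $<p^{a+1}$, it only excludes primes $r\mid(q-1)$ with $f(r)\ge p^{a+1}$ — a vastly sparser set, whose size is controlled \emph{unconditionally} by the prime-chain estimate (Lemmas \ref{chains}--\ref{fws}), not by ERH. ERH is then used only for the two terms $\pi(x;p^a,1)$ and $\pi(x;p^{a+1},1)$, so there is no accumulation of $x^{1/2}$ errors, and one may safely take $x=p^{3a}$ (for $a\ge2$) or $x=p^3$ (for $a=1$, $p$ large), with the residual small cases genuinely finite and checked by computer. The missing idea in your proposal is precisely this replacement of the coarse sieve condition ``$r<p^{a+1}$'' by the refined one ``$f(r)<p^{a+1}$'' together with an ERH-free bound on its exceptional set.
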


The main result of this paper is the following ``near'' proof of
Conjecture \ref{conjB}.  

\begin{thm}\label{thm3}
For an effective constant $K$, if $p^a>K$ then there is 
a prime $q$ with $p^a |(q-1)$ and $f(q)<p^{a+1}$.
\end{thm}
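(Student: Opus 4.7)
The plan is to count primes $q\equiv 1\pmod{p^a}$ in a range $[1,x]$ and show that the subset of such $q$ with $f(q)\ge p^{a+1}$ is a strict minority, so a ``good'' $q$ must exist once $p^a$ exceeds some effective threshold. Take $x=p^{(L+1)a}$ where $L\le 5.5$ is Heath-Brown's unconditional upper bound on Linnik's constant; quantitative forms of Linnik's theorem then yield $\pi(x,p^a,1)\gg x/(\phi(p^a)\log x)$. Call $q$ \emph{bad} if $f(q)\ge p^{a+1}$. Unrolling the definition of $f$, such $q$ satisfies $q>p^{a+1}$ (since $f(q)\le q-1$) and either (a) $q-1$ has a proper prime power divisor $s^e\ge p^{a+1}$ with $e\ge 2$, or (b) $q-1$ has a prime divisor $r$ that is itself bad.

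Case (a) contributes $\ll x/(p^{(3a+1)/2}\log x)$ to the bad count in our progression: apply Brun--Titchmarsh to $\pi(x,p^a s^e,1)$ and use the bound $\sum_{s^e\ge p^{a+1},\,e\ge 2}s^{-e}\ll p^{-(a+1)/2}$ (dominated by the $e=2$ term). This is already negligible compared with the total.

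For case (b), I would first bootstrap a bound of the form $\pi_B(y)\ll y(\log y)^C/p^{(a+1)/2}$ for the total count of bad primes $\le y$, by iterating (a) inside the recursion: at each level apply Brun--Titchmarsh to primes in arithmetic progressions, and the Mertens-type estimate $\sum_{r\equiv 1\pmod m,\,r\text{ prime},\,r\le Y}1/r\ll(\log\log Y)/\phi(m)$ to telescope the sums along a descending chain of primes in $T(q)$. Feed this into $B_2^*(x)\le\sum_{r\text{ bad},\,r\le x/p^a}\pi(x,p^a r,1)$, split according to whether $p^a r$ is small or close to $x$ (Brun--Titchmarsh in the first range, the trivial bound $\pi(x,p^a r,1)\le x/(p^a r)+1$ in the second), and apply partial summation to conclude $B_2^*(x)\ll x(\log x)^{C+1}/p^{(3a+1)/2}$. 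The ratio to $\pi(x,p^a,1)$ is then $\ll(\log x)^{C+2}/p^{(a+1)/2}$, tending to $0$ as $p^a\to\infty$.

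The main obstacle is the bound on $\pi_B(y)$. Each recursive step picks up a $\log\log y$ factor from Mertens in arithmetic progressions, and the witnessing chain in $T(q)$ can in principle be as long as $\log_2(y/p^{a+1})$, so a naive sum over depths gives an unusable factor of $(\log\log y)^{\log y}$. The halving property $r_{i+1}\le(r_i-1)/2$ forces the primes at depth $j-i$ of a witness chain to satisfy $r_i\ge p^{(a+1)/2}\cdot 2^{j-i}$, which restricts the effective chain length and produces geometric convergence rather than blow-up. Bad primes close to the cut-off $y$, where Brun--Titchmarsh degenerates, are controlled by trivial bounds together with the sparsity of bad primes already established in the easier ranges. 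Once the recursion is closed and the constants are tracked, the effective threshold $K$ can be read off.
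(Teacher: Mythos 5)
The broad strategy matches the paper's: lower bound the count of primes $q\equiv 1\pmod{p^a}$ in a range, upper bound the ``bad'' subset with $f(q)\ge p^{a+1}$, and conclude a good $q$ exists once $p^a$ is large. Your decomposition into cases (a) and (b) is essentially the paper's Lemma \ref{fws}. However, two steps are incomplete. First, the lower bound: effective forms of Linnik's theorem do \emph{not} give $\pi(x;p^a,1)\gg x/(\phi(p^a)\log x)$ with an effective implied constant, because a possible exceptional (Siegel) zero $\beta$ for the modulus reduces the main term by roughly a factor $(1-\beta)\log x$. The paper's Lemma \ref{pix} handles this by observing that for odd $p$ the real character modulo $p^a$ has conductor $p$ (and conductor $4$ or $8$ for $p=2$), so $1-\beta\gg p^{-1/2}\log^{-2}p$ effectively; this yields the weaker but effective bound $\gg \frac{x}{\log x\cdot p^{a+1/2}\log p}$. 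The extra loss of $p^{-1/2}\log^{-1}p$ is harmless for the final comparison, but without the conductor observation your stated lower bound is simply not effective.

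Second, and more seriously, the recursion for $\pi_B(y)$ is not actually closed. You correctly identify the danger --- iterating Brun--Titchmarsh and the Mertens-in-AP estimate gives a $\log\log$ factor per level, and chain lengths can reach $\log_2(y/p^{a+1})$ --- and you propose the halving property $r_{i+1}\le(r_i-1)/2$ as the cure. But the recursive inequality you obtain after one Brun--Titchmarsh step still contains $\pi_B$ on both sides, and a bound of the shape $y(\log y)^C/p^{(a+1)/2}$ is not self-consistent under one more iteration (the exponent $C$ drifts). The paper does not attempt this from scratch; it imports Lemma \ref{chains}, i.e.\ Theorem~2 of [FKL], which bounds the number of prime chains $p_1=p$, $p_k\le x$ by $C(\eps)(x/p)^{1+\eps}$ and is proved there by a Rankin-type generating-function/matrix argument with an eigenvalue estimate. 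That input is the real technical content; summing it over starting primes $r\equiv 1\pmod{s^b}$ with $s^b\ge p^{a+1}$ gives Lemma \ref{fqlarge} and closes the argument. Your plan needs a result of comparable strength, and the sentence ``Once the recursion is closed and the constants are tracked'' is precisely where the missing idea lives.
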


Theorem \ref{thm3} is proved in the next section.  Next, the proof of 
Theorem \ref{thm:ERH} will be given in Section \ref{sec:ERH}.

%
%
\section{Proof of Theorem \ref{thm3}}\label{sec:thm3}
%
%

We need first an effective lower bound for the number of primes in 
an arithmetic progression with prime power modulus.

\begin{lem}\label{pix}
There are positive, effective constants 
$K_1, K_2, K_3$ so that if $p^a \ge K_1$
and $x\ge p^{aK_2}$, then 
$$
\pi(x;p^a,1) - \pi(x;p^{a+1},1) \ge K_3 \frac{x/\log x}{p^{a+1/2}\log p}.
$$
\end{lem}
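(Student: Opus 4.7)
The plan is to write $D := \pi(x;p^a,1) - \pi(x;p^{a+1},1)$ as a sum of $\pi(x;p^{a+1},b)$ over $p-1$ residue classes modulo $p^{a+1}$ and to apply an effective prime number theorem for arithmetic progressions. The denominator factor $\sqrt{p}\log p$ in the target bound is exactly the reciprocal of the effective lower bound on a possible Siegel zero.

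Concretely, $D = \sum_{j=1}^{p-1}\pi(x;p^{a+1},1+jp^a)$, since $D$ counts primes $q\le x$ with $p^a\|(q-1)$. Passing to the Chebyshev sum $\psi_D(x)$ and invoking the truncated explicit formula character by character modulo $p^{a+1}$, the principal character contributes $(p-1)x/\phi(p^{a+1}) = x/p^a$. The contribution of every other character is a sum over nontrivial zeros of its $L$-function; I would dominate the total contribution of the non-exceptional characters by combining the classical zero-free region with Linnik's log-free zero density estimate, obtaining an upper bound of $O\bigl(x/p^a \cdot \exp(-c\sqrt{\log x})\bigr)$ provided $x \ge p^{(a+1)L}$ for Linnik's constant $L$. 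This is far smaller than the target.

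The decisive step is the exceptional real character. For odd $p$, the only non-principal real character modulo $p^{a+1}$ is the Legendre symbol $\chi^*$ of conductor $p$, induced up from mod $p$; since $\chi^*$ is trivial on the subgroup $1+p^a\ZZ/p^{a+1}\ZZ$, we have $\sum_{j=1}^{p-1}\bar\chi^*(1+jp^a) = p-1$, and a potential real zero $\beta$ of $L(s,\chi^*)$ contributes $-x^\beta/p^a$ to $\psi_D(x)$. Page's effective theorem gives $1-\beta \ge c_0/(\sqrt{p}(\log p)^2)$. Combined with the hypothesis $\log x \ge aK_2\log p$ and the elementary inequality $1-e^{-u}\ge u/2$ for $0\le u\le 1$, this yields
$$1 - x^{\beta-1} \;\ge\; \frac{(1-\beta)\log x}{2} \;\ge\; \frac{c_0 K_2}{2\sqrt{p}\log p},$$
so $\psi_D(x) \ge \bigl(c_0K_2/(2\sqrt{p}\log p)\bigr) \cdot x/p^a$ up to negligible errors. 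Partial summation converts this into the claimed lower bound on $D$, absorbing the expected $1/\log x$. For $p=2$ the primitive real characters of conductor dividing $2^{a+1}$ all have conductor at most $8$, so any exceptional zero is bounded away from $1$ by an absolute constant and the argument only simplifies.

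The main obstacle is ensuring that the non-exceptional error is genuinely negligible: since the target is only a factor $\sqrt{p}\log p$ below the naive main term $x/(p^a\log x)$, a bare zero-free region applied character by character would introduce unsupportable losses when summed over the $\phi(p^{a+1})$ characters modulo $p^{a+1}$, and Linnik's log-free zero density estimate (rather than a pointwise bound on each zero) is essential to keep the aggregate contribution below $x/(p^{a+1/2}\log p \log x)$. Once that is in hand, the remaining analysis is routine bookkeeping with the explicit formula.
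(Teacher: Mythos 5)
Your strategy is the same as the paper's in broad outline: isolate the potential exceptional real character (which for $p>2$ is induced from conductor $p$), use Page's effective lower bound $1-\beta \gg p^{-1/2}(\log p)^{-2}$, and observe that the resulting main term $\frac{x}{p^a}\bigl(1-x^{\beta-1}/\beta\bigr)\gg \frac{x}{p^a}\cdot(1-\beta)\log x$ has exactly the required size $\gg x/(p^{a+1/2}\log p)$. The paper packages the analytic input as Proposition 18.5 of Iwaniec--Kowalski, whereas you propose to re-derive it, but the intent is the same.

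The gap is in your treatment of the non-exceptional zeros, and it is fatal as stated. You claim that the classical zero-free region plus Linnik's log-free density estimate gives a non-exceptional error $O\bigl(x/p^a\cdot\exp(-c\sqrt{\log x})\bigr)$. First, that error shape is not achievable here: the Vinogradov--Korobov type bound $\exp(-c\sqrt{\log x})$ requires the modulus to be at most $\exp(c'\sqrt{\log x})$, but you have $p^{a+1}\asymp x^{(a+1)/(aK_2)}$, a fixed power of $x$. In the regime $\log q \asymp \log x$, the classical zero-free region together with the log-free density estimate only gives an error that is a small but \emph{fixed} constant multiple of $x/\phi(q)$ (roughly $e^{-c'K_2}$). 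Second, and more importantly, no bound of either of these forms can beat the main term. When a Siegel zero $\beta$ exists with $1-\beta$ as small as Page's theorem permits, the genuine main term is only of size $\frac{x}{p^a}\cdot\frac{K_2 a}{\sqrt{p}\log p}$, which tends to $0$ relative to $x/p^a$ as $p\to\infty$; a constant-times-$x/p^a$ error (or an $\exp(-c\sqrt{\log x})$ error) dwarfs it for large $p$. What is missing is the Deuring--Heilbronn repulsion phenomenon: the presence of a zero $\beta$ near $1$ forces \emph{every other} zero of every $L$-function modulo $p^{a+1}$ to have real part at most $1-c\log\bigl(1/((1-\beta)\log p^{a+1})\bigr)/\log p^{a+1}$, which makes the non-exceptional error $\ll\frac{x}{p^a}\bigl((1-\beta)\log p^a\bigr)^{c_3K_2}$. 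Since $(1-\beta)\log p^a<1$ in the problematic case, choosing $K_2$ large makes this a small power of the main term, hence negligible. That is precisely what the $x^{-\eta}$ term with $\eta=\frac{c_3\log\bigl(2+2/((1-\beta)\log q)\bigr)}{\log q}$ in Prop.~18.5 of \cite{IK} encodes, and it is the ingredient you need to add; the log-free density estimate alone will not close the gap.
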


\begin{proof}
This basically follows from an effective version of Linnik's Theorem.
For a modulus $q \ge 3$, let $\beta=\beta(q)$ the largest real zero of an
$L$-function (primitive or not) of a real character of modulus $q$.  
If no such zero exists, set
$\beta=\frac12$.   By Prop. 18.5 of \cite{IK}, there are effective constants 
$c_1,c_2,c_3$ so that if $x \ge q^{c_1}$ then
\be\label{Psixq1}
\Psi(x;q,1) = \frac{x}{\phi(q)} \left[ 1 - \frac{x^{\beta-1}}{\beta} +
\theta\( x^{-\eta}+ \frac{\log q}{q} \) \right],
\ee
where $|\theta| \le c_2$ and
$$
\eta = \eta(q) = \frac{c_3 \log (2 + \frac{2}{(1-\beta)\log q})}{\log q}.
$$
If $p>2$, then the real character modulo $p^a$ has conductor
$p$, hence $\beta(p^a)=\beta(p)$.
If $p=2$ then any real character modulo $p^a$ has conductor
4 or 8 and $\beta(2^a)=\frac12$.
By a classical theorem \cite[\S 14 (12)]{Da}, there is an effective constant $c>0$
so that we have
$$
\beta(p^a) \le 1 - \frac{c}{p^{1/2}\log^2 p}.
$$
Fix a prime power $p^a \ge 8$ and let $\b=\b(p)$, $\eta=\eta(p^a)$.
By \eqref{Psixq1} with $q=p^a$ and with $q=p^{a+1}$, we have
\be\label{psi2}
\Psi(x;p^a,1)-\Psi(x;p^{a+1},1) = \frac{x}{p^a} \left[ 1 -
  \frac{x^{\beta(p)-1}}{\beta(p)} + \theta' \( 
x^{-\eta}+ \frac{\log p^a}{p^a} \) \right],
\ee
where $|\theta'| \le c_2 \frac{p+1}{p-1} \le 3 c_2$.
If $\beta \le 1 - 1/\log p^a$, then the left side of \eqref{psi2} is
$\ge x/(2p^a)$ if $p^a$ and $K_2$ are sufficiently large.
If $\b > 1 - 1/\log p^a$, let $\del=1-\beta$, so that
\begin{align*}
1-\frac{x^{\b-1}}{\b} \ge \b - x^{-\del} &\ge 1-\del - e^{-\del K_2 \log p^a}
\\
&\ge -\del + \frac{\del K_2 \log p^a}{1+\del K_2 \log p^a}
\ge \del \( -1 + \frac{K_2\log p^a}{1+K_2} \) \\
&\ge \frac{K_2}{2+2K_2} (\del \log p^a)
\end{align*}
and
$$
x^{-\eta}\le \pfrac{\del \log p^a}{2}^{c_3 K_2} \le 2^{-K_2 c_3} (\del \log
p^a). 
$$
Hence,
$$
\Psi(x;p^a,1) -\Psi(x;p^{a+1},1) \gg 
\frac{x}{p^a} (\del \log p^a) \gg \frac{x}{p^{a+1/2} \log  p}.
$$
Finally,
$$
\pi(x;p^a,1) -\pi(x;p^{a+1},1) \ge
\frac{\Psi(x;p^a,1)-\Psi(x;p^{a+1},1) -O(\sqrt{x})}{\log x}
$$
and the proof is complete.
\end{proof}

Our next tool is an upper bound for the number of \emph{prime chains}
of a certain type.  A \emph{prime chain} is a sequence $p_1,\ldots,p_k$
of primes such that $p_i|(p_{i+1}-1)$ for $1\le i\le k-1$.
The following is Theorem 2 in \cite{FKL}.
%
%

\begin{lem}\label{chains}
For every $\eps>0$ there is an effective constant $C(\eps)$ so that
for any prime $p$, the number of prime chains with $p_1=p$ and
$p_k\le x$ (varying $k$) is $\le C(\eps) (x/p)^{1+\eps}$. 
\end{lem}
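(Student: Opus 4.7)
The plan is to induct on chain length, combining the Brun--Titchmarsh inequality with a geometric-type summation. Writing $N(p,x) = \sum_{k \ge 1} N_k(p,x)$ where $N_k(p,x)$ counts chains of length exactly $k$, the first observation is a chain-length bound: for any odd prime $p_i \ge 3$ in a chain, $p_{i+1}$ is a prime with $p_i \mid p_{i+1}-1$ and $p_{i+1} > p_i$; since $p_i+1$ is even, one must have $p_{i+1} \ge 2p_i+1$. Iterating, any chain with $p_k \le x$ has length $k \le 2 + \log_2(x/p)$.

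The analytic engine is Brun--Titchmarsh: $\pi(y;p,1) \le \frac{2y}{\phi(p)\log(y/p)}$ for $y \ge 2p$. Applied via partial summation with the substitution $y = pt$, this yields
\[
\sum_{\substack{q > p \\ q \equiv 1 \pmod p}} q^{-1-\eps} \le \frac{B(\eps)}{p^{1+\eps}}, \qquad B(\eps) \ll (1+\eps) \int_2^\infty \frac{dt}{t^{1+\eps}\log t},
\]
uniformly in $p$. Combined with the recursion $N_{k+1}(p,x) = \sum_{q > p,\ q \equiv 1 \pmod p,\ q \le x} N_k(q,x)$, induction on $k$ yields $N_k(p,x) \le B(\eps)^{k-1}(x/p)^{1+\eps}$.

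Summing over chain lengths gives $N(p,x) \le (x/p)^{1+\eps}\sum_{k=1}^{K} B(\eps)^{k-1}$ with $K = O(\log(x/p))$. When $B(\eps) < 1$ the geometric series is bounded by $1/(1-B(\eps))$, immediately giving $N(p,x) \le C(\eps)(x/p)^{1+\eps}$. Since $B(\eps) \to 0$ as $\eps \to \infty$, this settles all $\eps$ above some threshold $\eps^*$.

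The main obstacle is the regime of small $\eps$, where $B(\eps) \asymp \log(1/\eps)$ exceeds $1$ and the geometric sum diverges. Here one must exploit both the bounded chain length $K$ and a bootstrap argument: starting from the already-established bound $N(q,x) \ll_{\eps_0} (x/q)^{1+\eps_0}$ for some $\eps_0 > \eps^*$, substitute this improved estimate into the tail of the recursion (for $q$ above some threshold $Q(\eps)$), and treat the finitely many small primes $p < Q(\eps)$---notably $p=2$, whose contribution dominates $B(\eps)$---as explicit base cases estimated directly from the bound at larger $q$. Balancing these estimates so that the induction delivers exponent $1+\eps$ for arbitrarily small $\eps > 0$ is the technical crux of the argument, and where quantitative care with the Brun--Titchmarsh constants really matters.
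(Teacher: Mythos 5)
Your setup---the recursion $N_{k+1}(p,x)=\sum_{q\equiv 1\pmod p}N_k(q,x)$, Rankin's trick, the chain-length bound $k\ll\log(x/p)$, and the one-step estimate $\sum_{q\equiv 1\pmod p}q^{-1-\eps}\le B(\eps)p^{-1-\eps}$ from Brun--Titchmarsh---is fine, and it does prove the lemma in the regime where $B(\eps)<1$. But the entire content of the lemma is the case of small $\eps$, and there your argument has a genuine gap that the proposed ``bootstrap'' cannot close. First, substituting the known bound $N(q,x)\ll(x/q)^{1+\eps_0}$ into the recursion can only return an exponent at least $1+\eps_0$; a pass through the recursion never lowers the exponent. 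Second, designating the primes $p<Q(\eps)$ as base cases does not help, because the loss is not located at the bottom of the chain: the factor $B(\eps)>1$ is incurred at \emph{every} one of the up to $\log_2(x/p)$ links, for arbitrarily large intermediate primes $p_i$, and these losses multiply up to $(x/p)^{\log B/\log 2}$. Third---and this is the decisive point---the input you use is too weak in principle: Brun--Titchmarsh applied link by link is consistent with there being $\asymp 2^j/j$ primes $q\equiv 1\pmod{p_i}$ in every dyadic block $[2^{j-1}p_i,\,2^jp_i]$ at every step, a configuration under which the number of chains really would be $\gg(x/p)^{1+c}$ for some fixed $c>0$. So no rearrangement of step-by-step Brun--Titchmarsh data can give the bound for $\eps<c$; a new idea is required exactly where you defer to ``the technical crux.''

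The missing idea (this is Theorem 2 of \cite{FKL}, which the present paper cites rather than reproves; see the Remark following the lemma) is to write $p_{i+1}=m_ip_i+1$, observe that $\prod_i m_i\le x/p$, and exploit the fact that each $m_i$ must avoid the residue class $-p_i^{-1}$ modulo every prime $r\le y$ with $r\ne p_i$ (else $r\mid p_{i+1}$). Since the forbidden class depends on $p_i\bmod w$, where $w=\prod_{r\le y}r$, and $p_i\bmod w$ is itself determined by the earlier $m_j$'s, the Rankin sum $\sum\prod m_i^{-s}$ is controlled by powers of a matrix $M$ indexed by $(\ZZ/w\ZZ)^*$; one then chooses $y=y(\eps)$ large enough (of size $e^{O(1/\eps)}$) that the spectral radius of $M$ is below $1$. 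This sieving by coprimality to $w$ is what replaces your divergent one-step factor $B(\eps)\asymp\log(1/\eps)$ by a quantity less than $1$, and it is also why $C(\eps)$ ends up doubly exponential in $1/\eps$.
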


{\bf Remark.}  At the moment, the method of \cite{FKL} gives
$$
C(\eps) = \exp \exp \( (1+o(1)) \frac{1}{\eps} \log \frac{1}{\eps} \)
$$
as $\eps\to 0^+$.  We need a numerical value of $C(\eps)$ in one case.
By the argument in
\S 3 of \cite{FKL}, if $y<p$, $w$ is the product of the primes $\le y$,
and $s>1$, then the number of primes in question is at most the largest column
sum of
\[
x^s \sum_{0\le k\le \frac{\log x}{\log 2}} M^{k}, \quad
M = \biggl( \sum_{\substack{m\ge 1 \\ am+1\equiv b(\!\bmod{w})}}
m^{-s} \biggr)_{b,a\in (\mathbb{Z}/w\mathbb{Z})^*}.
\]
If all the eigenvalues of $M$ lie inside the unit circle, then
$\sum_{k=0}^\infty M^k = (I-M)^{-1}$. 
For example, taking $s=\frac54$ and $w=210$, so that $M$ is a $48
\times 48$ matrix, we compute that the largest column sum of
$(I-M)^{-1}$ is $\le 7.37$, so $C(\frac14)=7.37$ is admissible.

\begin{lem}\label{fqlarge}
For $0<\eps\le 1$ and $y\ge 10^{10}$, we have
$$
\# \{ q \le x : f(q) \ge y \} \le \frac{c(\eps) x^{1+\eps}}{
  y^{1/2+\eps}\log y}, 
$$
where
$$
c(\eps) = C(\eps)(2^{-1-\eps}-6^{-1-\eps}) \zeta(1+\eps) \( 0.44 +
\frac{2.43}{1+2\eps} \).
$$
\end{lem}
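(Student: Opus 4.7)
The plan is to interpret $f(q) \ge y$ as the existence of a proper prime power $p^a \ge y$ somewhere in $T(q)$, which by the recursive structure of the tree means there is a prime chain $s = s_1, s_2, \ldots, s_l = q$ with $p^a \| s - 1$. A union bound over such pairs $(p^a, s)$, combined with Lemma~\ref{chains} applied to the chain starting at $s$ with terminal prime at most $x$, yields
\[
\#\{q \le x : f(q) \ge y\} \;\le\; C(\eps)\, x^{1+\eps} \sum_{\substack{p^a \ge y \\ a \ge 2}} \sum_{\substack{s\text{ prime} \\ p^a \| s-1}} \frac{1}{s^{1+\eps}}.
\]

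The main technical step is the inner sum for fixed $p^a$. Since $s$ is a prime exceeding $p^a \ge y \ge 10^{10}$, it satisfies $\gcd(s,6) = 1$; writing $s - 1 = p^a K$ with $\gcd(K, p) = 1$, this forces $K$ to be even (for odd $p$) and imposes a restriction on $K$ modulo $3$. The identity
\[
\sum_{\substack{K \ge 1 \\ 2 \mid K,\; 3 \nmid K}} K^{-(1+\eps)} \;=\; (2^{-1-\eps} - 6^{-1-\eps})\,\zeta(1+\eps)
\]
(obtained by subtracting the multiples of $6$ from the even integers) together with $s > p^a K$ then motivates the bound
\[
\sum_{\substack{s\text{ prime} \\ p^a \| s-1}} \frac{1}{s^{1+\eps}} \;\le\; \frac{(2^{-1-\eps} - 6^{-1-\eps})\,\zeta(1+\eps)}{p^{a(1+\eps)}}.
\]
For $p = 3$ this is direct because $\gcd(K, 3) = 1$ is precisely ``$3 \nmid K$''. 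For odd $p \ne 3$, ``$3 \nmid s$'' instead excludes a single residue class of $K$ modulo $3$ that depends on $p^a \bmod 3$, and combining this with $\gcd(K, p) = 1$ by a character-sum argument recovers the same estimate; the $p = 2$ case uses a separate parametrization whose small overall contribution is absorbed below.

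Summing over proper prime powers, I would estimate
\[
\sum_{\substack{p^a \ge y \\ a \ge 2}} \frac{1}{p^{a(1+\eps)}} \;=\; \sum_{p \ge \sqrt{y}} \frac{1}{p^{2(1+\eps)}} \,+\, \sum_{a \ge 3} \sum_{p \ge y^{1/a}} \frac{1}{p^{a(1+\eps)}}.
\]
Partial summation on the dominant $a = 2$ term, using an explicit Rosser--Schoenfeld bound on $\pi(t)$, yields the $\frac{2.43}{(1+2\eps)\,y^{1/2+\eps}\log y}$ piece. For $a \ge 3$, the inequality $p^{a(1+\eps)} \ge y^{1+\eps}$ and partial summation produce terms of size $y^{1/a - 1/2}$ times the target, and the hypothesis $y \ge 10^{10}$ absorbs these into the constant $0.44$ (which also absorbs the $p = 2$ contribution). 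Multiplying by $C(\eps)(2^{-1-\eps} - 6^{-1-\eps})\zeta(1+\eps)$ then gives the stated bound. The main obstacle is the per-$p^a$ estimate: extracting the precise constant $(2^{-1-\eps} - 6^{-1-\eps})\zeta(1+\eps)$ uniformly in $p$ requires a delicate treatment of the mod-$3$ restriction when $p \ne 3$, where the excluded residue class is not simply ``$3 \mid K$''.
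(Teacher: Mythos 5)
Your architecture is the one the paper uses — decompose by the proper prime power $p^a\ge y$ that realizes $f(q)$, pick a prime $s$ with $p^a\mid(s-1)$, count the prime chains from $s$ to $q$ via Lemma~\ref{chains}, then estimate the resulting Dirichlet-type sum. But the step you flag as ``the main obstacle'' is in fact the easiest step, and you have talked yourself into a difficulty that is not there. You do not need to know \emph{which} residue class modulo $3$ is excluded: among the three residue classes of even $k$ modulo $6$, the one contributing least to $\sum k^{-1-\eps}$ is the multiples of $6$, so removing any single class leaves at most $(2^{-1-\eps}-6^{-1-\eps})\zeta(1+\eps)$. The paper phrases it even more crudely: for $s>3$, among any three consecutive even values of $k$ the integer $ks^b+1$ is divisible by $3$ for at least one, so at most two of every three consecutive even $k$ contribute, and the extremal set under that sliding constraint is $\{k : 2\mid k,\ 6\nmid k\}$. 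No character sums, no $\gcd(K,p)=1$, and no exact divisibility $p^a\|(s-1)$ are used; the paper simply sums over all $r\equiv 1\pmod{s^b}$. For $s\in\{2,3\}$ the inner sum is bounded trivially by $\zeta(1+\eps)$, and the total contribution of powers of $2$ and $3$ above $y$ is shown to be $\le\tfrac{7}{2}y^{-1-\eps}$, which for $y\ge10^{10}$ is absorbed by raising $0.43$ to $0.44$; you do not need to single out $p=3$ as a special ``clean'' case.

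The other place you diverge is the outer sum over proper prime powers, and there the divergence matters for landing on the stated $c(\eps)$. The paper does not split by exponent $a$; it introduces $S(t)=\#\{\text{proper prime powers}\le t\}$, proves via Rosser--Schoenfeld that $\tfrac{2t^{1/2}}{\log t}<S(t)\le\tfrac{2.43\,t^{1/2}}{\log t}$ for $t\ge10^{10}$, and performs one partial summation. The \emph{lower} bound on $S(y^-)$ contributes a negative boundary term $-\tfrac{2}{y^{1/2+\eps}\log y}$, which is exactly what turns the $2.43$ coming from the integral into the $0.43$ in the final constant. Your split-by-$a$ plan has no analogue of that boundary term, so while it would certainly give a bound of the right shape, you would need to reorganize the computation to match $0.44+\tfrac{2.43}{1+2\eps}$ rather than expecting the $a\ge 3$ and $p=2$ pieces simply to be ``absorbed.''
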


\begin{proof}
For a prime power $s^b \ge y$ with $b\ge 2$, 
let $q$ be a prime with $f(q)=s^b$.
Then there is a prime $r\equiv 1\pmod{s^b}$
and a prime chain with $p_1=r$ and $p_k=q$. 
Write $r=k s^b + 1$. 
By Lemma \ref{chains}, the number of such $q\le x$ is at most
$$
\sum_{\substack{r\le x \\ r\equiv 1\pmod{s^b}}} C(\eps)
\pfrac{x}{r}^{1+\eps} \le C(\eps) \pfrac{x}{s^b}^{1+\eps}
\sum_{\substack{k\ge 1 \\ ks^b+1 \text{ prime}}} k^{-1-\eps}. 
$$
 If $s>3$, we note that $k$ is even and among any 
three consecutive even values of $k$, $r$ is prime for at most two of
them.  For such $s$, the sum on $k$ is at most
$(2^{-1-\eps}-6^{-1-\eps})\zeta(1+\eps)$.  For $s\in \{2,3\}$, we bound
the sum on $k$ trivially as $\zeta(1+\eps)$.
The number of $q\le x$ is therefore at most
\be\label{ceps}
C(\eps) x^{1+\eps} \zeta(1+\eps) \left[ \sum_{2^b\ge y}
  \frac{1}{(2^b)^{1+\eps}} +  \sum_{3^b\ge y}
  \frac{1}{(3^b)^{1+\eps}} + (2^{-1-\eps}-6^{-1-\eps}) \sum_{s^b\ge y}
    \frac{1}{(s^b)^{1+\eps}} \right].
\ee
The first two sums in \eqref{ceps} total $\le \frac72 y^{-1-\eps}$.
To estimate the third sum, let $S(t)$ denote the number of proper
prime powers $\le t$.  By Theorem 1 and Corollay 1 of \cite{RS}, we
have
$$
\frac{x}{\log x} \le \pi(x) \le \frac{x}{\log x} \( 1 + \frac{3}{2\log
  x} \) \qquad (x\ge 17).
$$
If $t\ge 10^{10}$, then $S(t) > \pi(t^{1/2}) \ge \frac{2 t^{1/2}}{\log
  t}$ and
\begin{align*}
S(t) &= \sum_{k\ge 2} \pi(t^{1/k}) \le \sum_{k=2}^7 \pi(t^{1/k}) +
  \( \frac{\log t}{\log 2} - 7 \) \pi(t^{1/8}) \\
&\le \sum_{k=2}^7 \frac{k t^{1/k}}{\log t} \( 1 + \frac{3k}{2\log t}\)
  +\( \frac{\log t}{\log 2} - 7 \) \frac{8 t^{1/8}}{\log t} \(1 +
  \frac{12}{\log t}\) \\
&\le 2.43 \frac{t^{1/2}}{\log t}.
\end{align*}
By partial summation,
\be\label{sumsb}
\begin{split}
\sum_{s^b\ge y} \frac{1}{(s^b)^{1+\eps}} &= -
\frac{S(y^{-})}{y^{1+\eps}} + (1+\eps)\int_y^\infty
\frac{S(t)}{t^{2+\eps}}\, dt \\
&\le - \frac{2}{y^{1/2+\eps}\log y} + \frac{2.43(1+\eps)}{\log y}
\int_y^\infty \frac{dt}{t^{3/2+\eps}} \\
&= \frac{0.43 + \frac{2.43}{1+2\eps}}{y^{1/2+\eps}\log y}.
\end{split}
\ee
Combined with \eqref{ceps}, this completes the proof.
\end{proof}

\begin{lem}\label{fws}
Let $p$ be a prime and $p^{a+1} \ge 10^{10}$.  Then 
$$
\# \{ q\le x : p^a \| (q-1), f(q) \ge p^{a+1}\} \le
\frac{x}{p^{\frac{3a+1}{2}} \log(p^{a+1})} \left[ 2.86 +
  c(\eps)(1+1/\eps) \frac{x^\eps}{p^{(2a+1)\eps}} \right].
$$
\end{lem}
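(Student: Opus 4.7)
The plan is to partition $\{q \le x : p^a\|(q-1),\ f(q)\ge p^{a+1}\}$ according to where in the tree $T(q)$ the witnessing proper prime power of size $\ge p^{a+1}$ lies: at level $1$ (directly dividing $q-1$) or strictly deeper. These two alternatives will produce the $2.86$ term and the $c(\eps)$ term of the bound, respectively.

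The \emph{level-$1$ case} is handled by the same tools as in Lemma \ref{fqlarge}. Here some $s^b\|(q-1)$ with $b\ge 2$ and $s^b\ge p^{a+1}$; since $p^a\|(q-1)$, the possibility $s=p$ would force $s^b=p^a<p^{a+1}$, so $\gcd(p^a,s^b)=1$ and $p^a s^b\mid (q-1)$. Hence at most $x/(p^a s^b)$ primes $q\le x$ arise for each such $s^b$, and summing with $\sum_{s^b\ge y,\,b\ge 2}(s^b)^{-1}\le 2.86/(y^{1/2}\log y)$ --- which is the bound \eqref{sumsb} at $\eps=0$ --- with $y=p^{a+1}$ gives the first bracketed term.

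For the \emph{deeper case}, the key structural observation is that the large prime power must live in the subtree rooted at a \emph{simple} prime $s\|(q-1)$: if it were to sit below a level-$1$ node $s_0^{b_0}$ with $b_0\ge 2$, it would be at most $s_0-1<s_0\le s_0^{b_0}<p^{a+1}$ by the case hypothesis, a contradiction. Thus $f(s)\ge p^{a+1}$, which forces $s>p^{a+1}$; moreover $s=p$ is impossible since it would require $a=1$ and $f(p)\le p-1<p^2$. Consequently $s\ne p$ and $p^a s\mid (q-1)$, giving $s\le (x-1)/p^a$; the count is therefore bounded by $(x/p^a)\sum_{s\le x/p^a,\,f(s)\ge p^{a+1}} s^{-1}$.

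The final step is to evaluate this sum by Abel summation. The counting function $N(t)=\#\{s\le t:f(s)\ge p^{a+1}\}$ vanishes on $[1,p^{a+1}]$ (since $f(s)\le s-1$), and for larger $t$ it is bounded by Lemma \ref{fqlarge}; partial summation then yields $c(\eps)(1+1/\eps)X^\eps/(p^{(a+1)(1/2+\eps)}\log p^{a+1})$ for the sum truncated at $X$. Taking $X=x/p^a$ and multiplying by $x/p^a$, the exponent of $p$ becomes $a(1+\eps)+(a+1)(1/2+\eps)=(3a+1)/2+(2a+1)\eps$, exactly as in the second bracketed term. I expect the main conceptual hurdle to be the Case~2 structural argument (identifying the simple prime $s$ and ruling out $s=p$), while the key technical subtlety is truncating the sum at $X=x/p^a$ rather than $X=x$: this is what yields the sharper $(2a+1)\eps$ exponent rather than the $(a+1)\eps$ one that $X=x$ would produce.
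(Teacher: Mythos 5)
Your proof is correct and mirrors the paper's: the same dichotomy (witnessing proper prime power $s^b\ge p^{a+1}$ directly dividing $q-1$, versus strictly deeper in $T(q)$), the same application of \eqref{sumsb} at $\eps=0$ for the first part, and the same use of Lemma~\ref{fqlarge} with partial summation truncated at $x/p^a$ for the second, yielding the exponent $a(1+\eps)+(a+1)(1/2+\eps)=\frac{3a+1}{2}+(2a+1)\eps$. Your extra structural step insisting that the deep witness sits under a prime $s$ with $s\|(q-1)$ (rather than merely $s\mid(q-1)$) is true but unnecessary: the paper simply notes that some prime $r\mid(q-1)$ has $f(r)\ge p^{a+1}$, and the count $x/(p^a r)$ already follows from $p^a r\mid(q-1)$, with $r\ne p$ automatic because $f(p)\le p-1<p^{a+1}$.
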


\begin{proof}  
If $p^a \| (q-1)$ and $f(q) \ge p^{a+1}$, then either
$p^a s^b | (q-1)$ for some proper prime power $s^b$ with $s\ne p$ and
$s^b\ge p^{a+1}$, or there is a prime $r|(q-1)$ with $f(r)\ge p^{a+1}$.
The number of such $q\le x$ is, using Lemma \ref{fqlarge},
\eqref{sumsb} and partial summation,
\begin{align*}
&\le \sum_{s^b \ge p^{a+1}} \frac{x}{p^a s^b} +
  \sum_{\substack{r\le x/p^a \\ f(r) \ge p^{a+1}}} \frac{x}{p^a r} \\
&\le \frac{2.86 x}{p^{(3a+1)/2}\log (p^{a+1})} +  c(\eps)
  \frac{x}{p^{a+(1/2+\eps)(a+1)}\log(p^{a+1})}  \left[
  \pfrac{x}{p^a}^{\eps}  + \int_{p^{a+1}}^{x/p^a} u^{-1+\eps}\, du
  \right].
\end{align*}
This completes the proof of the lemma.
\end{proof}

\begin{proof}[Proof of Theorem \ref{thm3}]
Let $p^a \ge \max(10^{10},K_1)$, $x=p^{aK_2}$ and
$\eps=\frac{1}{2K_2}$.  By Lemmas \ref{pix} and \ref{fws},
\begin{align*}
\# \{ q\le x : p^a \| (q-1), f(q) < p^{a+1} \} &= \pi(x;p^a,1) -
\pi(x;p^{a+1},1) \\
&\qquad\qquad  - \# \{ q\le x : p^a\|(q-1), f(q) \ge p^{a+1} \} \\
&\ge  K_3 \frac{x/\log x}{p^{a+1/2}\log p} - c'(\eps)
\frac{x}{p^{\frac{3a+1}{2}} \log(p^{a+1})} p^{(K_2-2)a\eps} \\
&> 0
\end{align*}
if $p^a$ is large enough, where $c'(\eps)$ is a constant 
depending only on $\eps$.
\end{proof}

%
%
\section{Proof of Theorem \ref{thm:ERH}}\label{sec:ERH}
%
%

We first take care of small $p^a$.  If $a=1$ and $p\le 18000000$
($1151367$ primes) and when $a\ge2$ and $p^a \le 10^{10}$ ($10084$ prime
powers), we find a prime $q$ with $p^a \| (q-1)$ and
$q<p^{2a+1}$.  By \eqref{fqp}, $f(q)<p^{a+1}$ for such $q$.  The
calculations were performed using PARI/GP.

Next, suppose $a=1$, $p>18000000$ and put $x=p^3$.  By \eqref{ERH},
\begin{align*}
\pi(x;p,1)-\pi(x;p^2,1) &\ge \frac{\text{li}(x)}{p-1} - \sqrt{x} \log (xp^2)
- \frac{x}{p^2} \\
&\ge \frac{p^2}{\log p} \left[ \frac13 - 5 \frac{\log^2 p}{p^{1/2}} -
  \frac{\log p}{p} \right] > 0,
\end{align*}
as desired.

Lastly, suppose $a\ge 2$ and $p^a > 10^{10}$, and put $x=p^{3a}$.  By
\eqref{ERH}, 
\be\label{pixp}
\begin{split}
\pi(x;p^a,1)-\pi(x;p^{a+1},1) &\ge \frac{\text{li}(x)}{p^a} - \sqrt{x}
\log (x^2 p^{4a+2}) \\
&\ge \frac{p^{2a}}{\log(p^a)} \left[ \frac13 - 11
  \frac{\log^2(p^a)}{p^{a/2}} \right] \\
&\ge 0.275 \frac{p^{2a}}{\log(p^a)}.
\end{split}
\ee
Since we may take $C(\frac14)=7.37$ in Lemma
\ref{chains}, we have
$c(\frac14) \le 22$ for Lemma \ref{fqlarge}.  
By Lemma \ref{fws} and \eqref{pixp},
\begin{align*}
\# \{ q\le x : p^a \| (q-1), f(q)<p^{a+1} \} &\ge 0.275
  \frac{p^{2a}}{\log(p^a)} - \frac{p^{\frac{3a-1}{2}}}{\log(p^{a+1})}
  \left[ 2.86 + 110 p^{\frac{a-1}{4}} \right] \\
&\ge\frac{p^{2a}}{\log(p^a)} \left[ 0.275 - \frac{2.03}{p^{a/2}} -
    \frac{66}{p^{a/4}} \right] \\
&>0,
\end{align*}
as desired.

\medskip

{\bf Acknowledgements.}  We thank the anonymous referee for useful suggestions.
 This work started during a visit of the second author at the
 Mathematics Department of the University of Illinois in
 Urbana-Champaign  in January of 2007. He thanks the people of 
that department for their hospitality.

%
%

\end{document}